\newtheorem{theorem}{Theorem}[section]
\newtheorem{lemma}[theorem]{Lemma}
\newtheorem{cor}[theorem]{Corollary}
\theoremstyle{definition}
\newtheorem{definition}[theorem]{Definition}
\newtheorem{example}[theorem]{Example}
\newcommand{\fH}{{\mathfrak H}}
\newcommand{\fK}{{\mathfrak K}}
\newcommand{\fF}{{\mathfrak F}} 
\newcommand{\fN}{{\mathfrak N}}
\newcommand{\fS}{{\mathfrak S}}
\newcommand{\fX}{{\mathfrak X}}
\newcommand{\fY}{{\mathfrak Y}}
\newcommand{\id}{\triangleleft}
\newcommand{\str}{\ll}
\newcommand{\stains}{\gg}
\newcommand{\strG}{\ll_\text{G}}
\newcommand{\p}{{\mbox{$[p]$}}}
\newcommand{\scp}{{\mbox{$\scriptstyle [p]$}}}
\DeclareMathOperator{\ad}{ad}
\DeclareMathOperator{\soc}{Soc}
\DeclareMathOperator{\Hom}{Hom}
\DeclareMathOperator{\ch}{char}
\DeclareMathOperator{\loc}{Loc}
\newcommand{\cser}{\mathcal{C}}
\numberwithin{equation}{section}
\title[Strong containment]{Strong containment of saturated formations of soluble Lie algebras}
\author{Donald W. Barnes}
\address{1 Little Wonga Rd.\\Cremorne NSW 2090\\Australia\\}
\email{donwb@iprimus.com.au}
\subjclass[2010]{Primary 17B30, Secondary  20D10}
\keywords{Lie algebras, Schunck classes, saturated formations, local definition}
\begin{document}

\begin{abstract} It is shown that, if $\fH,\fK$ are saturated formations of soluble Lie algebras over a field of non-zero characteristic, and $\fH \stains \fK$ is a non-trivial example of strong containment, then $\fH = \fH/\fN$ and $\fH$ is not locally defined.
\end{abstract}

\maketitle
\section{Introduction}
The concept of strong containment for Schunck classes of finite soluble groups was introduced by Cline \cite{cline} in 1969.  It is discussed extensively in Doerk and Hawkes \cite{DH}.
\begin{definition} Let $\fH,\fK$ be Schunck classes of finite soluble groups.  We say that $\fH$ strongly contains $\fK$, written $\fH \stains \fK$ if, for every finite soluble group $G$, every $\fK$-projector of an $\fH$-projector of $G$ is a $\fK$-projector of $G$.
\end{definition}

Much attention is given to the special case where $\fH$ and $\fK$ are formations.  It is easy to give examples of strong containment of saturated formations of finite soluble groups.  If $\pi_1 \subset \pi_2$ are sets of primes, then the class of soluble $\pi_1$-groups is strongly contained in the class of soluble $\pi_2$-groups since a Hall $\pi_1$-subgroup of a Hall $\pi_2$-subgroup of $G$ is clearly a Hall $\pi_1$-subgroup of $G$.

I. S.  Guti\'errez Garc\'ia has asked in a private communication to the author, if there exist non-trivial examples of strong containment of saturated formations of soluble Lie algebras.

In the following, all Lie algebras are soluble and finite-dimensional over the field $F$ and $\fH, \fK$ are Schunck classes of soluble Lie algebras over $F$. 

\begin{definition} We say that $\fH$ is strongly contains  $\fK$, written $\fH \stains \fK$, if, for every soluble Lie algebra $L$ and $\fH$-projector $H$ of $L$, every $\fK$-projector of $H$ is a
$\fK$-projector of $L$. \end{definition}  

There are clearly three trivial cases, $\fK = 0$, $\fK = \fH$ and $\fH = \fS$, the class of all soluble Lie algebras. 

The cases of $\ch(F) =0$ and $\ch(F) \ne 0$ are dramatically different.  

\begin{theorem}\label{char0}  Suppose $\ch F = 0$.  Let $\fH \supseteq \fK$ be Schunck classes.  
Then $\fH \stains \fK$. \end{theorem}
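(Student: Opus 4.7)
The plan is to induct on $\dim L$, reducing via quotients to proving $\fK$-maximality of $K$ in $L$ itself, and then to invoke properties of projectors specific to characteristic zero. For the inductive step, fix an $\fH$-projector $H$ of $L$ and a $\fK$-projector $K$ of $H$; to show $K$ is a $\fK$-projector of $L$, I must verify that $(K+I)/I$ is $\fK$-maximal in $L/I$ for each ideal $I$ of $L$. For non-zero $I$, this follows from the induction hypothesis applied to the strictly smaller algebra $L/I$: the image $(H+I)/I$ is an $\fH$-projector of $L/I$, and $(K+I)/I \cong K/(K \cap I)$ is a $\fK$-projector of $(H+I)/I \cong H/(H \cap I)$, so by induction $(K+I)/I$ is a $\fK$-projector of $L/I$, hence in particular $\fK$-maximal there. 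The problem reduces to showing that $K$ itself is $\fK$-maximal in $L$.

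For that maximality, I would suppose $K \leq M \leq L$ with $M \in \fK \subseteq \fH$ and aim to force $M = K$. Exploiting that, in characteristic zero, every $\fH$-subalgebra of a soluble Lie algebra is contained in some $\fH$-projector, let $H'$ be an $\fH$-projector of $L$ with $M \leq H'$. Both $H$ and $H'$ then contain $K$. I would next produce an inner automorphism $\exp(\ad n)$, with $n$ in the nilpotent radical of $L$, that fixes $K$ pointwise and sends $H'$ onto $H$. The image of $M$ under this automorphism lies in $H$, still contains $K$, and remains in $\fK$, so the $\fK$-maximality of $K$ in $H$ forces this image to equal $K$; transporting back by the inverse automorphism, $M = K$.

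The main obstacle is establishing the existence of the conjugating inner automorphism that simultaneously fixes $K$ and sends $H'$ to $H$. This is a Frattini-type refinement of the usual conjugacy of $\fH$-projectors, and it is precisely where the characteristic zero hypothesis enters: $\exp(\ad n)$ requires $\ad n$ nilpotent and divisibility by integers, and the iterative adjustment of $n$ to simultaneously control $K$ and $H'$ rests on the complementation and splitting theorems available for soluble Lie algebras in characteristic zero. In characteristic $p$ these tools fail, which is consistent with the subsequent sections of the paper producing non-trivial strong containments.
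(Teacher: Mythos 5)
Your reduction to the case $I=0$ (showing $K$ is $\fK$-maximal in $L$ itself) is fine, but the argument for that case has two genuine gaps. First, you invoke the principle that ``in characteristic zero, every $\fH$-subalgebra of a soluble Lie algebra is contained in some $\fH$-projector'' in order to find an $\fH$-projector $H'$ containing $M$. This principle is false, even for completely soluble algebras: take $\fH = \fN$ and $L = \langle x,y\rangle$ with $[x,y]=y$; the $\fN$-projectors are the Cartan subalgebras $\langle x+cy\rangle$, and the nilpotent subalgebra $\langle y\rangle$ lies in none of them. You would need to prove the containment in your specific situation (where $M$ contains a $\fK$-projector of an $\fH$-projector), and that is not done. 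Second, the step you yourself flag as ``the main obstacle'' --- an inner automorphism $\exp(\ad n)$ carrying $H'$ to $H$ while stabilising $K$ (you even ask for it to fix $K$ pointwise, which is more than conjugacy gives) --- is precisely the Frattini-type content of the theorem, and it is asserted rather than proved. As it stands the proposal is a reduction of the statement to two unproved claims, one of which is false in the generality in which you state it.

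For comparison, the paper's proof is a one-line reduction: in characteristic $0$ every soluble Lie algebra is completely soluble (its derived algebra is nilpotent), and for completely soluble algebras the statement $\fH \stains \fK$ for Schunck classes $\fH \supseteq \fK$ is Theorem 3.7 of Barnes and Newell \cite{BN}. If you want a self-contained argument rather than a citation, the efficient route is the one used in Lemma \ref{lem-min} of this paper: pass to a minimal counterexample, show it is primitive with $H$ complementing the socle $A$, and then use the cohomological vanishing and conjugacy results available for completely soluble algebras ($H^1$ arguments and conjugacy under $1+\ad a$ with $a \in A$, as in \cite{BGH}) to derive a contradiction. That localises the use of characteristic zero to a single primitive configuration instead of requiring a global conjugacy statement.
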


\begin{proof} Every soluble Lie algebra over $F$ is completely soluble, so $\fH \stains \fK$ by Barnes and Newell \cite[Theorem 3.7]{BN}.
\end{proof}

For $\ch(F) \ne 0$, it is easy to produce examples of strong containment of Schunck classes, but the existence of non-trivial examples where $\fH$ and $\fK$ are formations remains unanswered.

To avoid continual reference to trivial cases, it is always assumed in the following that $\fS \ne\fH \ne \fK\ne 0$.  The class of all nilpotent algebras is denoted by $\fN$ and $0$ is used to denote the zero element, the zero algebra and the class containing only the zero algebra according to context. As the case of characteristic $0$ has been settled, in the following, it is assumed that $\ch(F) = p \ne 0$.  The socle of the Lie algebra $L$ is denoted by $\soc(L)$ and the nil radical of $L$ is denoted by $N(L)$.  If $V$ is an $L$-module, $\cser_L(V)$ denotes the centraliser of $V$ in $L$, that is, the kernel of the representation of $L$ on $V$.  If $\fF$ is a formation, the $\fF$-residual of the algebra $L$ is denoted by $L_{\fF}$.  This is the smallest ideal $K$ of $L$ with $L/K \in \fF$.

That a result is the Lie algebra analogue of a result in Doerk and Hawkes \cite{DH} is indicated by (DH, Lemma $x$, p. $y$).  Proofs which are exact translations are omitted.

\section{Strong containment}
In this section, we investigate basic properties of strong containment of Schunck classes.

\begin{lemma}\label{versions}  Suppose $\fH \stains \fK$.  Then every $\fK$-projector of a soluble Lie algebra $L$ is contained in some $\fH$-projector of $L$.
\end{lemma}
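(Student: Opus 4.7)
The plan is to exploit the conjugacy of $\fK$-projectors inside a soluble Lie algebra to transport a $\fK$-projector inside some $\fH$-projector onto the given one. Concretely, let $K$ be a $\fK$-projector of $L$, and choose any $\fH$-projector $H$ of $L$ (which exists because $\fH$ is a Schunck class and $L$ is soluble). Since $H$ is itself a soluble Lie algebra, it contains a $\fK$-projector $K'$.

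By the hypothesis $\fH \stains \fK$, applied to $H$ as an $\fH$-projector of $L$, the subalgebra $K'$ is then a $\fK$-projector of $L$ as well. Now I would invoke the conjugacy theorem for projectors of a Schunck class in a soluble Lie algebra: there exists an automorphism $\alpha$ of $L$, of the type under which projectors of Schunck classes are known to be conjugate (the group generated by $\exp(\ad x)$ for suitable $x$), with $\alpha(K')=K$. Applying $\alpha$ to the inclusion $K' \subseteq H$ gives $K \subseteq \alpha(H)$, and since $\alpha$ preserves $\fH$-projectors, $\alpha(H)$ is an $\fH$-projector of $L$ containing $K$, as required.

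The only real point of care is making sure that the conjugating automorphism exists and simultaneously sends an $\fH$-projector to an $\fH$-projector; this is automatic once we use the standard conjugacy result for projectors in soluble Lie algebras (the Lie algebra analogue of the result used in Doerk and Hawkes). All other steps are formal: existence of projectors in soluble algebras, closure of projector-ness under the relevant automorphisms, and direct application of the strong containment hypothesis.
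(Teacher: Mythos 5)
Your argument stands or falls on the invocation of ``the conjugacy theorem for projectors of a Schunck class in a soluble Lie algebra,'' and that is exactly where it fails. The paper works over a field of characteristic $p \ne 0$ (the characteristic-zero case having already been settled by Theorem~\ref{char0}), and in that setting projectors of a Schunck class of a soluble Lie algebra are in general \emph{not} conjugate --- not under the automorphisms $1+\ad x$, nor under any exponential-type group. Already the $\fN$-projectors (Cartan subalgebras) of a soluble Lie algebra in characteristic $p$ can fail to be conjugate; they can even have different dimensions. The conjugacy results that are actually available, such as \cite[Lemma 1.11]{BGH} used in the paper, are local: they conjugate two complements of an abelian minimal ideal $A$ inside an algebra of the form $K+A$ by an automorphism $1+\ad a$ with $a \in A$, under a cohomological hypothesis that has to be arranged. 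So the step ``there exists an automorphism $\alpha$ of $L$ with $\alpha(K')=K$'' is unjustified and false in general. The surrounding steps (existence of projectors, preservation of projectors under automorphisms, the application of $\fH \stains \fK$ to $H$) are all correct, but they do not carry the argument without that conjugation.

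The paper gets around this with a minimal-counterexample induction: for a minimal ideal $A$ it lifts the problem to $L/A$, reduces to the case where $L$ is primitive and $A$ is complemented by an $\fH$-projector $H$ faithfully represented on $A$, and then compares $K$ with $K_1 = H \cap (K+A)$. Both are $\fK$-projectors of the subalgebra $K+A = K_1+A$, and only in that restricted situation does it invoke the conjugacy of complements of $A$ by some $1+\ad a$, $a \in A$, after which $\alpha_a(H)$ is the desired $\fH$-projector containing $K$. To repair your outline you would need to replace the appeal to global conjugacy by a reduction of this kind.
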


\begin{proof}  Let $L$ be a soluble Lie algebra of
least possible dimension with a $\fK$-projector $K$ not contained in any $\fH$-projector of $L$.  Let $A$ be a
minimal ideal of $L$.  Then $K+A/A$ is contained in some $\fH$-projector $H^*/A$ of $L/A$.  If
$H^* < L$, then there exists an $\fH$-projector $H$ of $H^*$ which contains $K$.  But $H$ is an
$\fH$-projector of $L$ by \cite[Lemma 1.8]{BGH}.  Therefore $H^* = L$, and $A$ is
complemented in $L$ by an $\fH$-projector $H$.  If $B$ is a minimal ideal of $L$ contained in $H$,
then $L/B \in \fH$ contrary to $H$ being an $\fH$-projector.  Therefore $L$ is primitive and $H$ is
faithfully represented on $A$.  Let $K_1 = H \cap (K+A)$.  Since $H \simeq L/A$, $K_1$ is a
$\fK$-projector of $H$ and so also of $L$ since $\fK \str \fH$.  Thus both $K$ and $K_1$ are
$\fK$-projectors of $K+A = K_1+A$.  By \cite[Lemma 1.11]{BGH}, there exists $a \in A$ such
that $\alpha_a(K_1) = K$ where $\alpha_a:L \to L$ is the automorphism $1+\ad_a$.  Then $\alpha_a(H)$ is an $\fH$-projector of $L$ which contains $K$.
\end{proof} 

\begin{lemma}\label{lem-min}  Let $L$ be an algebra of least possible dimension with an
$\fH$-projector $H$ and a $\fK$-projector $K$ of $H$ which is not a $\fK$-projector of $L$.  Then
$L$ is primitive with $H$ complementing $\soc L$. \end{lemma}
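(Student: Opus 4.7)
The plan is to carry out a minimal-counterexample reduction along a minimal ideal of $L$, in the spirit of the corresponding group-theoretic arguments in Doerk--Hawkes.

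Fix a minimal ideal $A$ of $L$. First I would pass to $L/A$: since $H$ is an $\fH$-projector of $L$ and $K$ is a $\fK$-projector of $H$, standard facts give that $(H+A)/A$ is an $\fH$-projector of $L/A$ and that $(K+A)/A$ is a $\fK$-projector of $(H+A)/A$. As $\dim L/A < \dim L$, the minimality hypothesis applied to $L/A$ forces $(K+A)/A$ to be a $\fK$-projector of $L/A$.

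Next I would establish $H+A = L$ and $H \cap A = 0$. If $H+A$ were a proper subalgebra, minimality applied to it would make $K$ a $\fK$-projector of $H+A$, and hence of the intermediate subalgebra $K+A$; combined with the first step and the standard ideal-lifting for projectors (essentially \cite[Lemma 1.8]{BGH} as used in Lemma \ref{versions}), this would force $K$ to be a $\fK$-projector of $L$, contradicting the hypothesis. For the second assertion, $A \cap H$ is an ideal of $L = H+A$ (it is an ideal of $H$ because $A$ is an ideal of $L$, and $[A, A \cap H] \subseteq [A,A] = 0$ because $A$ is abelian), contained in the minimal ideal $A$; if $A \subseteq H$ then $L = H \in \fH$, which again makes $K$ a $\fK$-projector of $L$. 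So $L = H \oplus A$ as vector spaces.

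For primitivity, suppose $B \neq A$ is a second minimal ideal of $L$. Then $A \cap B = 0$ and $L \notin \fH$ (else $H = L$ and $K$ would be a $\fK$-projector of $L$). If $B \subseteq H$, then $H+B = H \neq L$, and re-running the previous paragraph with $B$ in place of $A$ again produces $K$ as a $\fK$-projector of $L$, a contradiction. If instead $B \not\subseteq H$, a direct calculation using $[A,B] \subseteq A \cap B = 0$ and $A \cap H = 0$ shows that $H \cap (A+B)$ is $L$-invariant of dimension $\dim B > 0$, so a nonzero ideal of $L$ contained in $H$; any minimal ideal $B'$ of $L$ inside it reduces to the previous case. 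Therefore $A$ is the unique minimal ideal of $L$, and $H$ complements $\soc L$.

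The main obstacle is the correct deployment of the ideal-lifting property for $\fK$-projectors, which is the non-trivial input driving the reductions in the second and third paragraphs; once it is in hand, every step is routine.
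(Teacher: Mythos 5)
Your proof is correct and follows essentially the same route as the paper: quotient by a minimal ideal $A$, use minimality of $L$ twice (on $L/A$ and on $H+A$) together with the ideal-lifting property of projectors to force $H+A=L$, then deduce primitivity. You merely spell out the details (that $H\cap A=0$ and that a second minimal ideal would yield a nonzero ideal of $L$ inside $H$) which the paper compresses into its final sentence.
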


\begin{proof} Clearly $H \not= L$. Let $A$ be a minimal ideal of $L$.  Suppose $H+A < L$.  Then
$H$ is an $\fH$-projector of $H+A$, so $K$ is a $\fK$-projector of $H+A$.  Also, $H+A/A$ is an
$\fH$-projector of $L/A$ and $K+A/A$ is a $\fK$-projector of $H+A/A$, so $K+A/A$ is a
$\fK$-projector of $L/A$.  As $K$ is a $\fK$-projector of $K+A/A$, it is a $\fK$-projector of $L$. 
Therefore $H+A = L$.  As this holds for every minimal ideal, there is only one minimal ideal and
$L$ is primitive. \end{proof}

\begin{definition} The {\em boundary} of the Schunck class $\fX$ is the class $ b(\fX)$ of those Lie algebras not in $\fX$ whose proper quotients are in $\fX$.
A class $\fY$ of primitive algebras is called a {\em boundary class} if the intersection of $\fY$ with the class of proper quotients of algebras in $\fY$ is empty. 
\end{definition}
Clearly, $b(\fX)$ is a boundary class.  That every boundary class is the boundary of a Schunck class follows as in (DH, 2.3, p. 284).

\begin{definition}[DH 4.15, p. 308] The {\em avoidance class} of $\fH$ is the class $a(\fH)$ of primitive algebras $P$  with $H \cap \soc(P) = 0 $ for all $\fH$-projectors $H$ of $P$.  \end{definition}

Clearly, $b(\fH) \subseteq a(\fH)$.

\begin{lemma}  Let $P \in a(\fK)$ and let $M$ complement $A = \soc(P)$.  Then $M$ contains an $\fK$-projector of $P$. 
\end{lemma}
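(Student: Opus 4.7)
The plan is to start with any $\fK$-projector $K$ of $P$, extract a canonical copy $K^* := M \cap (K+A)$ lying in $M$, and show $K^*$ is itself a $\fK$-projector of $P$. By the avoidance hypothesis, $K \cap A = 0$. Applying the modular law to $A \le K+A \le M+A = P$ gives $K+A = A + (M \cap (K+A)) = K^* + A$, while $K^* \cap A \le M \cap A = 0$. Thus $K^*$ is a complement to $A$ in $K+A$ isomorphic to $(K+A)/A \simeq K$, so $K^* \in \fK$ and $K^* \le M$. Under the isomorphism $M \simeq P/A$, $K^*$ corresponds to $(K+A)/A$, which is the image of a $\fK$-projector and hence is itself a $\fK$-projector of $P/A$; so $K^*$ is a $\fK$-projector of $M$.

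To promote $K^*$ to a $\fK$-projector of $P$, I would verify its $\fK$-maximality in $P$. Suppose $K^* \le W \le P$ with $W \in \fK$. The image $(W+A)/A$ is a $\fK$-subalgebra of $P/A \simeq M$ containing $K^*$; by $\fK$-maximality of $K^*$ in $M$, it equals $K^*$, so $W + A = K + A$. Now the standard fact that every $\fK$-subalgebra of a soluble Lie algebra lies in some $\fK$-projector places $W$ inside a $\fK$-projector $H$ of $P$; because $P \in a(\fK)$, $H \cap A = 0$, so $W \cap A = 0$. A dimension count in $W + A = K + A$, with both $W$ and $K^*$ complementing $A$, then forces $W = K^*$.

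Finally, since every nonzero ideal of the primitive algebra $P$ contains $A$, checking the $\fK$-projector property for $K^*$ in each quotient $P/N$ either reduces to the $\fK$-maximality in $P$ just proved (when $N = 0$) or to the already established $\fK$-projector property of $K^*$ inside $M \simeq P/A$ (when $N \supseteq A$). Thus $K^*$ is a $\fK$-projector of $P$, and $K^* \le M$ gives the lemma. The principal obstacle is the $\fK$-maximality step: it rests on the Gasch\"utz-style embedding of $\fK$-subalgebras into $\fK$-projectors for soluble Lie algebras, a standard but nontrivial background ingredient that must be invoked to combine the avoidance hypothesis with the dimension argument.
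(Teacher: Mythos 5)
Your construction of the candidate $K^* = M \cap (K+A)$ is the same one the paper uses, and the first paragraph (modular law, $K^* \cap A = 0$, $K^* \simeq K \in \fK$, $K^*$ a $\fK$-projector of $M \simeq P/A$) is correct. The gap is in the promotion step: the ``standard fact that every $\fK$-subalgebra of a soluble Lie algebra lies in some $\fK$-projector'' is false, and the whole $\fK$-maximality argument rests on it. Take $\fK = \fN$, so that $\fK$-projectors are the Cartan subalgebras, and let $P = \langle x,y \rangle$ with $[x,y]=y$. Then $P$ is primitive, $P \in a(\fN)$, $\soc(P) = \langle y \rangle$ is a nilpotent subalgebra, and it is contained in no Cartan subalgebra of $P$ (these are the one-dimensional self-normalising subalgebras $\langle x + \lambda y\rangle$). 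So precisely in the setting of this lemma you cannot embed an arbitrary $\fK$-subalgebra $W$ into a $\fK$-projector, and your deduction $W \cap A = 0$ does not follow; knowing only $W \supseteq K^*$, $W \in \fK$ and $W + A = K+A$, the possibility that $W \cap A$ is a nonzero proper $K^*$-submodule of $A$ is not excluded by anything you have written.

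The paper closes exactly this hole by a conjugacy argument instead of a maximality argument: since $K$ is a $\fK$-projector, for each composition factor $B/C$ of $A$ as a $K$-module the subalgebra $K+C/C$ is a $\fK$-projector of $K+B/C$, whence $H^1(K,B/C)=0$ and so $H^1(K,A)=0$. Therefore all complements of $A$ in $K+A$ are conjugate under the automorphisms $\alpha_a = 1 + \ad_a$ with $a \in A$, and in particular $K^* = M \cap (K+A) = \alpha_a(K)$ for some $a$. Being the image of a $\fK$-projector under an automorphism of $P$, $K^*$ is itself a $\fK$-projector, with no need to verify maximality by hand. If you want to keep your outline, you must replace the embedding-into-a-projector step with this (or an equivalent) cohomological conjugacy argument.
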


\begin{proof}  Let $U$ be a $\fK$-projector of $P$.  Then $U \cap A = 0$.  Let $B/C$ be a composition factor of $A$ as $U$-module.  Then $U+C/C$ is an $\fK$-projector of $U+B/C$ and $H^1(U, B/C) = 0$.  Thus $H^1(U, A) = 0$ and so, if $V$ complements $A$ in $U+A$, then there exists $a \in A$ with
$\alpha_a(U) = V$ where $\alpha_a: P \to P$ is the automorphism $1+\ad_a$.  In particular, for $V = M \cap (U+A)$, we have that $V = \alpha_a(U)$ is a $\fK$-projector of $P$.
\end{proof}

\begin{theorem}[DH 1.5, p. 429] Let $\fH \supset \fK$ be Schunck classes.  Then $\fH \stains \fK$ if and only if $b(\fH) \subseteq a(\fK)$.
\end{theorem}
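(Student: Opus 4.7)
The theorem is an equivalence between $\fH \stains \fK$ and $b(\fH) \subseteq a(\fK)$; I plan to treat the two implications separately. Necessity should follow directly from Lemma~\ref{versions} together with the structure of boundary classes, while sufficiency will go by a least-dimensional counterexample argument built on Lemma~\ref{lem-min} and the lemma immediately preceding the theorem.

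For necessity, let $P \in b(\fH)$. Since a boundary class consists of primitive algebras, $P$ is primitive with unique minimal ideal $A = \soc(P)$, and $P/A \in \fH$ as a proper quotient. Given any $\fH$-projector $H$ of $P$, the image $(H+A)/A$ is an $\fH$-projector of $P/A$ and hence equals $P/A$ since $P/A \in \fH$; thus $H + A = P$. Because $A$ acts trivially on itself, its $H$-submodules coincide with its $P$-submodules, so $A$ is irreducible as an $H$-module and $H \cap A$ is either $0$ or $A$. The latter forces $H = P \in \fH$, contradicting $P \in b(\fH)$, so $H \cap A = 0$. Lemma~\ref{versions} now places every $\fK$-projector $K$ of $P$ inside some $\fH$-projector $H'$, whence $K \cap A \subseteq H' \cap A = 0$ and $P \in a(\fK)$.

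For sufficiency, suppose $\fH \stains \fK$ fails and let $L$ be of least dimension witnessing this, with an $\fH$-projector $H$ and a $\fK$-projector $K$ of $H$ that is not a $\fK$-projector of $L$. Lemma~\ref{lem-min} makes $L$ primitive with $H$ complementing $A = \soc(L)$, so $L/A \cong H \in \fH$; and $L \notin \fH$ (otherwise $H = L$), hence $L \in b(\fH) \subseteq a(\fK)$. The lemma immediately preceding the theorem, applied with $P = L$ and $M = H$, supplies a $\fK$-projector $K^{*}$ of $L$ contained in $H$; by persistence of projectors in intermediate subalgebras, $K^{*}$ is also a $\fK$-projector of $H$. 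So $K$ and $K^{*}$ are both $\fK$-projectors of $H$ and therefore $H$-conjugate, and the remaining task is to realize that conjugacy by an automorphism of $L$, making $K$ a $\fK$-projector of $L$ and contradicting the choice of $L$. I expect the main obstacle to be exactly this lifting step: the standard $H$-conjugacy of $\fK$-projectors is realized by elementary maps $1 + \ad_{a}$ with $a$ in abelian ideals of sections of $H$, but such ideals need not be ideals of $L$, and since $H$ acts faithfully and irreducibly on $A$ an element $a$ there need not act nilpotently on $A$, so $1 + \ad_{a}$ may fail to extend to an automorphism of $L$. The intended remedy is to push the comparison into $K^{*}+A$, where $A$ is a genuine abelian ideal of $L$ and \cite[Lemma~1.11]{BGH} yields a conjugating $\alpha_{a}$ with $a \in A$, which is automatically an automorphism of $L$.
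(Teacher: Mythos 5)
Your necessity direction ($\fH \stains \fK$ implies $b(\fH) \subseteq a(\fK)$) is correct and is essentially the paper's argument via Lemma~\ref{versions}, with the useful extra detail of why $H \cap \soc(P) = 0$ for an $\fH$-projector $H$ of $P \in b(\fH)$ spelled out.

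The sufficiency direction has a genuine gap, and the remedy you sketch does not close it. After Lemma~\ref{lem-min} and the avoidance lemma you hold two $\fK$-projectors $K$ and $K^{*}$ of $H$ and hope to conjugate one to the other by an automorphism of $L$ ``inside $K^{*}+A$''. But $K$ need not be contained in $K^{*}+A$ at all (both $K$ and $K^{*}$ lie in $H$, which meets $A$ trivially, and there is no reason for $K+A = K^{*}+A$), so \cite[Lemma 1.11]{BGH} has nothing to act on; moreover that lemma requires both subalgebras to be $\fK$-projectors of the common algebra, and ``$K$ is a $\fK$-projector of $K+A$'' is essentially what you are trying to establish. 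Worse, $\fK$-projectors of $H$ need not be conjugate in characteristic $p$, so no argument starting from the pair $(K,K^{*})$ inside $H$ can work. The repair is to drop $K^{*}$ and instead take a $\fK$-projector $U$ of $K+A$ itself: since $(K+A)/A \in \fK$ we get $U+A = K+A$, so $(U+A)/A$ equals $(K+A)/A$, which is a $\fK$-projector of $L/A \simeq H$, and \cite[Lemma 1.8]{BGH} then makes $U$ a $\fK$-projector of $L$; the hypothesis $L \in b(\fH) \subseteq a(\fK)$ forces $U \cap A = 0$, so $U$ and $K$ are both complements of $A$ in $K+A$, and the $H^{1}(U,A)=0$ computation from the proof of the avoidance lemma gives $K = \alpha_{a}(U)$ for some $a \in A$. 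Since $\alpha_{a} = 1+\ad_a$ is an automorphism of all of $L$, $K$ is a $\fK$-projector of $L$, the desired contradiction. (For comparison, the paper's own proof of this implication consists of the single assertion that the minimal counterexample lies in $b(\fH)\setminus a(\fK)$; the argument just sketched is what that assertion requires.)
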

 
\begin{proof}  Suppose  $\fK \not\str \fH$.  Then by Lemma \ref{lem-min}, there exists a primitive
algebra $L \in b(\fH) \backslash a(\fK)$ and $b(\fH) \not\subseteq a(\fK)$.   Suppose  $\fK \str \fH$.   Let $P \in b(\fH)$, let $K$ be a $\fK$-projector of $P$.  Then $K \le H$ for some $\fK$-projector $H$ of $P$.  Since $H \cap\, \soc(P) = 0$, we have $K \cap\, \soc(P) = 0$ and $P \in a(\fK)$.   
\end{proof}

 There exist non-trivial examples of Schunck classes with $\fH \stains \fK$.

\begin{example}\label{ex-strb} Let $\fK$ be a Schunck class and suppose that $b(\fK)$ contains more
than one (isomorphism type of) primitive algebra.  Let $\fX$ be a non-empty subclass of $b(\fK)$, $\fX
\ne b(\fK)$.  Let $\fH$ be the Schunck class with boundary $\fX$.  Then $\fX \subset a(\fK)$ and we have $\fK \str \fH \ne \fS$.
\end{example}

\section{Formations}
We now investigate the special case in which the Schunck classes $\fH, \fK$ are formations.  Our investigation parallels the work of D'Arcy set out in Chapter VII of Doerk and Hawkes \cite{DH}.  D'Arcy uses the formation functions $f,g$ of the canonical local definitions of the saturated formations $\fK, \fH$ and obtains the following necessary and sufficient condition for $\fK \str \fH$.

\begin{theorem}[DH, VII.5.1, p. 509]\label{Darcy}Let $f,g$ be the canonical definitions of the saturated formations $\fK, \fH$ of finite soluble groups.  Then $\fK \str \fH$ if and only if, for each $H \in \fH$ and $\fK$-projector $K$ of $H$, we have $H_{g(p)} \subseteq K_{f(p)}$ for all $p \in \ch(\fF)$.
\end{theorem}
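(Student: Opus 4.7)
The plan is to invoke the boundary-avoidance criterion of the previous theorem, so that proving $\fK \str \fH$ becomes proving $b(\fH) \subseteq a(\fK)$, and the whole question collapses onto primitive algebras of the form $P = H \ltimes A$ with $A = \soc(P)$ a simple $H$-module and $H \in \fH$ a complement.  The local definitions $f, g$ supply two key dictionaries: $P \in \fH$ if and only if $H_{g(p)} \subseteq \cser_H(A)$, and, for a $\fK$-projector $K$ of $H$ and a subalgebra $U$ of $K + A$ with $W = U \cap A$, $U \in \fK$ if and only if $K_{f(p)}$ acts trivially on every composition factor of $W$ as a $K$-module.

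Sufficiency then runs cleanly.  Given $P \in b(\fH)$ and a $\fK$-projector $U$ of $P$, conjugate so that $U \subseteq K + A$ for some $\fK$-projector $K$ of $H$, set $W = U \cap A$, and suppose $W \ne 0$.  Picking a simple $K$-submodule $W_1 \subseteq W$, the second dictionary gives $K_{f(p)} \cdot W_1 = 0$, whence by the hypothesis $H_{g(p)} \subseteq K_{f(p)}$ also $H_{g(p)} \cdot W_1 = 0$.  Since $H_{g(p)}$ is an ideal of $H$, its annihilator in $A$ is an $H$-submodule of $A$ containing $W_1 \ne 0$; by simplicity of $A$ as $H$-module, this annihilator is all of $A$, contradicting $H_{g(p)} \not\subseteq \cser_H(A)$ (which holds because $P \in b(\fH)$).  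Hence $W = 0$ and $P \in a(\fK)$.

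Necessity I would argue contrapositively.  Given $H \in \fH$, a $\fK$-projector $K$ of $H$, and a failure $H_{g(p)} \not\subseteq K_{f(p)}$, I would construct a simple $H$-module $A$ on which $H_{g(p)}$ acts non-trivially but whose $K_{f(p)}$-annihilator (automatically a $K$-submodule of $A$, since $K_{f(p)}$ is an ideal of $K$) is non-zero.  Then $P = H \ltimes A$ lies in $b(\fH)$; the subalgebra $K + W_1$, with $W_1$ any non-zero $K$-submodule of that annihilator, is in $\fK$ by the second dictionary; and extending $K + W_1$ to a $\fK$-projector of $P$ produces one meeting $A$, whence $P \notin a(\fK)$ contradicts $\fK \str \fH$.

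The main obstacle is this module construction for necessity.  One needs a simple $H$-module whose $H$-kernel misses $H_{g(p)}$ but on which $K_{f(p)}$, in general not an ideal of $H$, has non-zero fixed points.  The natural route is to build $A$ as an irreducible constituent of an $H$-module induced from a suitable simple $K$-module annihilated by $K_{f(p)}$, and then to use the hypothesis $H_{g(p)} \not\subseteq K_{f(p)}$ to pick out a constituent on which $H_{g(p)}$ remains active.  This is the technical core of D'Arcy's argument in Doerk and Hawkes VII.5.1, and in the Lie-algebra setting it is exactly the kind of ``exact translation'' that the paper flags as omitted.
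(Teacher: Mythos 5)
First, a point of orientation: the paper does not prove this statement at all. It is quoted verbatim from Doerk and Hawkes as D'Arcy's theorem about \emph{finite soluble groups}, purely as motivation for the Lie-algebra analogue that the paper does prove (Theorem \ref{KstrH}, which is only the ``necessity'' half and uses $\fH/\fN$ in place of a local definition). So your closing appeal to the paper's ``exact translation omitted'' convention does not apply here; there is no proof in the paper to translate. On the merits, your sufficiency argument is essentially the standard one and is workable, but the second ``dictionary'' is stated too loosely: what $U\in\fK$ gives you is that $U_{f(p)}$ centralises the $U$-composition factors of $W=U\cap A$, and you must first observe that $W$ is normalised by $UA=KA$ and centralised by the abelian $A$, so that the action factors through $U/W\simeq K$ and $(U/W)_{f(p)}$ is identified with $K_{f(p)}$, before you may replace $U_{f(p)}$ by $K_{f(p)}$ and invoke the hypothesis $H_{g(p)}\subseteq K_{f(p)}$.

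The genuine gap is in necessity, and you have flagged it yourself: the construction of the module $A$ is the entire content of that direction and you leave it as a black box. Moreover the reduction is not right as stated. Producing a simple $H$-module $A$ on which $H_{g(p)}$ acts nontrivially only gives the split extension $P\notin\fH$; for $P$ to lie in $b(\fH)$ you also need $P$ primitive (so $A$ faithful) with every proper quotient in $\fH$, and you need $K+W_1$ to actually sit inside a $\fK$-projector of $P$ (an arbitrary $\fK$-subalgebra of a group or algebra outside $\fK$ need not be contained in one). None of this is arranged by your construction, and arranging it simultaneously with the fixed-point condition for $K_{f(p)}$ is precisely the hard part of D'Arcy's argument. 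The cleaner route -- and the one the paper takes for its Lie-algebra version -- is to drop the boundary criterion for this direction and argue directly from the definition of strong containment: for an arbitrary $H$-module $V$ form the split extension $L$ of $V$ by $H$, identify the unique $\fH$-projector of $L$ containing $H$ and the unique $\fK$-projectors containing $K$ in $L$ and in that $\fH$-projector, deduce $V^{(K,\fK+)}\subseteq V^{(H,\fH+)}$, and then extract the residual containment by separate module-theoretic lemmas (the analogues of Lemmas \ref{divbyN} and \ref{nilformsub}). If you want a complete proof of the group theorem you should follow that template, or reproduce D'Arcy's induced-module construction in full, rather than gesturing at it.
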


A locally defined formation of soluble Lie algebras has a single defining formation, not a family as in the group case, which simplifies our analysis, as does our only needing to find a necessary condition for $\fK \str \fH$.  It is complicated by the fact that   not all saturated formations of soluble Lie algebras are locally defined. We need a substitute for the defining formation.  This is provided by the quotient formation $\fH/\fN$ defined as follows.

\begin{definition}\label{def-div} Let $\fH$ be a saturated formation.  We  define
the quotient of $\fH$ by $\fN$ to be
$$\fH/\fN = \{L/A \mid L \in \fH, A \id L, N(L) \le A\}.$$ 
\end{definition}

By Barnes \cite[Lemma 3.2]{local}, $\fH/\fN$ is a formation.  If $\fH$ is locally defined by $\fF$, then $\fF = \fH/\fN$ by \cite[Theorem 3.3]{local}.

  Suppose that $V$ is an $L$-module, $K$ is an ideal of $L$ and that $K \in \fK$ for some saturated formation $\fK$.  By Barnes \cite[Lemma 1.1]{Ado}, there is an $L$-module direct decomposition $V = V^{(K,\fK+)}\oplus V^{(K, \fK-)}$ where, as $K$-modules,  $V^{(K,\fK+)}$ is $\fK$-hypercentral and $V^{(K, \fK-)}$ is $\fK$-hypereccentric. 

\begin{lemma} \label{lem-nilform}  Let $A \in \fK$ be a subalgebra of $L$ and let $B$ be a nilpotent ideal of $L$.  Suppose that $L = A+B$.  Suppose that  $V^{(A,\fK+)} \subseteq V^{(B,\fN+)} $ for every $L$-module $V$.  Then $A \supseteq B$.
\end{lemma}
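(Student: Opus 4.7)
The plan is to argue by contradiction: assume $B\not\subseteq A$ and exhibit a specific $L$-module $V$ for which $V^{(A,\fK+)}\not\subseteq V^{(B,\fN+)}$.

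I would first apply induction on $\dim L$ to reduce to a base case. If a minimal $L$-ideal $M$ of $L$ is contained in $A\cap B$, the hypotheses carry over unchanged to $\bar L=L/M$, with $\bar A=A/M\in\fK$ (formations are quotient-closed) and $\bar B=B/M$ nilpotent: any $\bar L$-module is an $L$-module with $M$ in the kernel, and the $\fK$- and $\fN$-hypercentral decompositions from \cite[Lemma~1.1]{Ado} are preserved under this identification. The induction hypothesis then yields $\bar B\subseteq\bar A$, whence $B\subseteq A+M=A$, contradicting our assumption. So we may assume no minimal $L$-ideal of $L$ contained in $B$ lies in $A$; in particular, any minimal $L$-ideal $M\subseteq B$ satisfies $M\cap A\subsetneq M$.

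To construct $V$, the first attempt is a one-dimensional module $V=Fv$ with $A\cdot v=0$ and $b\cdot v=\chi(b)v$ for a non-zero linear form $\chi\colon B\to F$. Compatibility forces $\chi([a,b])=0$ for $a\in A,\, b\in B$ and $\chi([b,b'])=0$ for $b,b'\in B$, so $\chi$ must factor through $B/[L,B]$. If $B\neq[L,B]$ we pick any non-zero such $\chi$; then $V^{(A,\fK+)}=V$, since the trivial $A$-module $F$ is $\fK$-central (because $A\in\fK$, the extension $A\oplus F\in\fK$ by saturation), while $V^{(B,\fN+)}=0$, since $B$ acts by a non-zero scalar and so its single $B$-composition factor is not $\fN$-central. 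This is the desired contradiction.

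The main obstacle is the case $B=[L,B]$, where a one-dimensional construction is impossible. Here the plan is to exploit $\ch(F)=p>0$ to build a $p^k$-dimensional irreducible $L$-module whose restriction to $A$ has only $\fK$-central composition factors but on which $B$ acts with a non-nilpotent component, echoing the toy construction in which, for $L=\langle a,b\mid [a,b]=b\rangle$ in characteristic $p$, one takes a $p$-dimensional module with $a$ acting diagonally with eigenvalues $0,1,\dots,p-1$ and $b$ acting as a cyclic permutation of the eigenbasis (so $b$ has eigenvalues the $p$-th roots of unity). Producing such a module in general, and verifying the required $\fK$-centrality of its $A$-composition factors from $A\in\fK$ and the structure of $L$, is the technical heart of the lemma and would use the representation-theoretic input of Barnes' work on Ado's theorem \cite{Ado} and the formation-theoretic input of \cite{local}.
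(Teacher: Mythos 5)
Your proposal does not close the main case, and that case is the whole content of the lemma. The one\-dimensional construction disposes only of the situation where a non\-zero character of $L$ killing $A$ exists, i.e.\ where $B \not\subseteq A + [L,L]$ (note that your condition $B \ne [L,B]$ is not quite enough: $\chi$ must vanish not only on $[L,B]$ but on all of $B \cap (A+[L,L])$, since it has to extend to a character of $L$ that is zero on $A$). For the remaining case you offer a toy example (a $p$-dimensional module for the two-dimensional non-abelian algebra) and state that producing such a module in general ``is the technical heart of the lemma.'' That is an accurate self-assessment: the argument is missing exactly there, and nothing in the sketch indicates how to guarantee, for a general $A \in \fK$, that the $A$-composition factors of such a module are $\fK$-central while $B$ acts non-nilpotently. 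Your inductive reduction is also weaker than it should be: you only factor out minimal ideals lying in $A \cap B$, which does not force any useful structure on the minimal counterexample.

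The paper avoids constructing irreducible modules altogether. Quotienting by an \emph{arbitrary} minimal ideal $K$ gives $A+K \supseteq B+K$, hence $A+K=L$; since the lemma is trivial when $A=L$, the core of $A$ is zero, $L$ is primitive, and $B=K=\soc(L)=N(L)$ with $A$ a complement. Now take a faithful irreducible $L$-module $V$: the hypothesis forces $V^{(B,\fN+)}=0$ (as $B$ would act nilpotently on it) and hence $V^{(A,\fK+)}=0$. Letting $\eta\colon L\to A$ be the projection along $B$ and $W$ be $V$ with the action pulled back through $\eta$, the module $X=\Hom_F(W,V)$ restricted to $B$ is a direct sum of copies of $V$ (since $B$ acts trivially on $W$), so $X^{(B,\fN+)}=0$; yet the identity map spans a trivial, hence $\fK$-central, $A$-submodule of $X$, so $X^{(A,\fK+)}\ne 0$. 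This contradiction finishes the proof. I would encourage you to adopt this $\Hom$-module device rather than trying to build explicit irreducible modules in positive characteristic, which is where your approach stalls.
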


\begin{proof} Let $L$ be a minimal counterexample and let $K$ be a minimal ideal of $L$.  Then $L/K, A+K/K, B+K/K$ satisfy the conditions of the lemma, so $A+K \supseteq B+K$.  Thus $A+K = L$.  The result holds if $A=L$, so $A$ contains no minimal ideal of $L$.  It follows that $L$ is primitive and $N(L)= K$, so $B=K$.  Now $L$ has a faithful, completely reducible $L$-module.  Since $L$ has only one minimal ideal, it follows that there exists a faithful irreducible $L$-module $V$.  But $B$ acts nilpotently on the $L$-submodule $V^{(B,\fN+)}$.  Since $V$ is faithful and irreducible, this implies that $V^{(B,\fN+)} = 0$.  Therefore $V^{(A,\fK+)} = 0$.

 Let $\eta: L \to A$ be the epimorphism $\eta(x) = (x+B) \cap A$.  Since $A \cap B = 0$, we can define a new action $x \cdot v = \eta(x)v$ of $L$ on $V$.   Let $W$ be $V$ with this new action.  Put $X = \Hom_F(W,V)$  and we have $X^{(B,\fN+)}=0$.  But for the identity function $f(v)= v$, we have $af = 0$ for all $a \in A$ and $\langle f \rangle$ is the trivial $A$-module. It is $\fK$-central, contrary to $X^{(A,\fK+)} \subseteq X^{(B,\fN+)} $.
\end{proof}

\begin{lemma} \label{nilformsub} Let $A \in \fK$ be a subalgebra of $L$ and let $B$ be a nilpotent ideal of $L$.   Suppose that for every $L$-module $V$, we have $V^{(A,\fK+)} \subseteq V^{(B,\fN+)} $.  Then $A \supseteq B$.
\end{lemma}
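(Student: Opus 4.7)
Since Lemma~\ref{nilformsub} differs from Lemma~\ref{lem-nilform} only in dropping the assumption $L = A + B$, the natural strategy is to reduce to that case by passing to the subalgebra $L' = A + B$. Because $B \id L$, we have $[A,B] \subseteq B$, so $L'$ is a subalgebra of $L$ and $B$ is a nilpotent ideal of $L'$ with $L' = A + B$; all of the non-module hypotheses of Lemma~\ref{lem-nilform} transfer automatically to the triple $(L',A,B)$.

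The substantive task is to establish the module hypothesis for $L'$-modules. The decompositions $V = V^{(A,\fK+)} \oplus V^{(A,\fK-)}$ and $V = V^{(B,\fN+)} \oplus V^{(B,\fN-)}$ depend only on $V$ regarded as an $A$-module and a $B$-module respectively, so it suffices, given an $L'$-module $V$, to embed $V$ into some $L$-module $M$ as an $L'$-submodule. The natural candidate is the induced module $M = U(L) \otimes_{U(L')} V$, in which $V$ sits as $1 \otimes V$ by PBW. Since $V \subseteq M$ is then an $A$-submodule and $M^{(A,\fK+)}$ is the largest $\fK$-hypercentral $A$-submodule of $M$, the characterisation of $V^{(A,\fK+)}$ as the largest $\fK$-hypercentral $A$-submodule inside $V$ gives $V^{(A,\fK+)} = V \cap M^{(A,\fK+)}$, and analogously $V^{(B,\fN+)} = V \cap M^{(B,\fN+)}$. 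Intersecting the inclusion $M^{(A,\fK+)} \subseteq M^{(B,\fN+)}$ supplied by the $L$-module hypothesis with $V$ then yields $V^{(A,\fK+)} \subseteq V^{(B,\fN+)}$. Applying Lemma~\ref{lem-nilform} to $(L',A,B)$ gives $B \subseteq A$.

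The main obstacle is that $M$ is infinite-dimensional, whereas the decomposition \cite[Lemma~1.1]{Ado} is stated for finite-dimensional modules. The principal technical step is therefore either to extend the decomposition to the locally finite setting and verify that $M$ enjoys this property as an $A$- and as a $B$-module, or to replace $M$ by a suitably chosen finite-dimensional $L$-module that still contains a copy of $V$ as an $L'$-submodule on which the decompositions restrict correctly. Once this is done the remainder of the argument is just the bookkeeping above together with an invocation of Lemma~\ref{lem-nilform}.
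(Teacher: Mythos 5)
Your overall plan coincides with the paper's: pass to $M=A+B$ (your $L'$), transfer the module hypothesis from $L$-modules to $M$-modules by embedding an arbitrary $M$-module into the restriction of an induced $L$-module, identify $V^{(A,\fK+)}$ and $V^{(B,\fN+)}$ as intersections with the corresponding components of the big module, and then invoke Lemma~\ref{lem-nilform}. That bookkeeping is fine. But you have correctly located, and then left unresolved, the one step that actually requires an idea: the induced module $U(L)\otimes_{U(L')}V$ over the ordinary universal enveloping algebra is infinite-dimensional, so the decomposition of \cite[Lemma 1.1]{Ado} does not apply to it, and as written the argument does not close. Merely naming two possible escape routes is not a proof of either.

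The paper's resolution uses the standing hypothesis $\ch F=p\neq 0$ in an essential way: take a $p$-envelope $(L^e,[p])$ of $L$, let $U$ be the restricted (universal $[p]$-) enveloping algebra of $L^e$ and $U_1\subseteq U$ that of the $[p]$-closure of $M$ in $L^e$, and induce $W=U\otimes_{U_1}V$. Since the restricted enveloping algebra of a finite-dimensional restricted Lie algebra has dimension $p^{\dim}$, the module $W$ is \emph{finite}-dimensional, and by the restricted PBW theorem $U$ is free as a $U_1$-module, so $1\otimes V$ is an $M$-submodule of $W$ isomorphic to $V$. With $W$ finite-dimensional, the intersection argument you describe goes through verbatim and Lemma~\ref{lem-nilform} finishes the proof. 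So the gap is genuine but is exactly of the second kind you anticipated (replace the induced module by a finite-dimensional one containing $V$); the missing ingredient is the specific device of inducing over restricted enveloping algebras of a $p$-envelope, which is why the lemma is a characteristic-$p$ statement rather than a formal consequence of Lemma~\ref{lem-nilform}.
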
 

\begin{proof}   Put $M = A+B$.  We prove that $M, A, B$ satisfy the conditions of the lemma.   Let $(L^e, \p)$ be a $p$-envelope of $L$ and let
$U$ be the universal \p-enveloping algebra of $L^e$.  Let $U_1 \subseteq U$ be the universal    
\p-enveloping algebra of the \p-closure $M_\scp$ of $M$.  Let $V$ be any $M$-module.  Let $W = U
\otimes_{U_1} V$ be the induced $L^e$-module.  Then $W$ is an $L$-module, so we have
$W^{(A,\fK+)} \subseteq W^{(B,\fN+)}$.  But $V_1 = \{1 \otimes v \mid v \in V \}$ is an $M$-submodule
isomorphic to $V$.  Since
$$V_1^{(A,\fK+)} = W^{(A,\fK+)} \cap V_1 \subseteq W^{(B,\fN+)} \cap V_1 = V_1^{(B,\fN+)},$$
we have $V^{(A,\fK+)}  \subseteq V^{(B,\fN+)}$ for every $M$-module $V$.  By Lemma \ref{lem-nilform}, $A \supseteq B$. 
\end{proof}

\begin{lemma} \label{divbyN} Let $\fH$ be a saturated formation and let $\fF = \fH/\fN$.  Let $H \in \fH$ and let $V$ be an $H$-module.  Then $V^{(H,\fH+)} \subseteq V^{(H_{\fF}, \fN+)}$.  
\end{lemma}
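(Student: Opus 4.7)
The plan is to show that $H_\fF$ acts nilpotently on $V^{(H,\fH+)}$, since by the decomposition $V = V^{(H_\fF,\fN+)} \oplus V^{(H_\fF,\fN-)}$ every $H$-submodule on which $H_\fF$ acts nilpotently is forced into $V^{(H_\fF,\fN+)}$. First I would observe that the target object is well-defined: since $H \in \fH$, Definition~\ref{def-div} applied to $L = H$ and $A = N(H)$ gives $H/N(H) \in \fH/\fN = \fF$, so $H_\fF \subseteq N(H)$ and in particular $H_\fF \in \fN$.

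The main step is the $\fH$-central case: if $W$ is an $H$-module with $G := H \ltimes W \in \fH$, I would show that $H_\fF$ acts nilpotently on $W$. Since $W$ is an abelian (hence nilpotent) ideal of $G$, it lies in the nil radical $N(G)$, and the vector space splitting $G = H \oplus W$ yields $N(G) = N_H \oplus W$, where $N_H := N(G) \cap H$ is a nilpotent ideal of $H$. Applying Definition~\ref{def-div} to $L = G$ and $A = N(G)$ gives $G/N(G) \in \fF$, and the isomorphism $G/N(G) \cong H/N_H$ then forces $H_\fF \subseteq N_H \subseteq N(G)$ by minimality of the $\fF$-residual. Because $N(G)$ is nilpotent and contains $W$ as an ideal, iterated bracketing of $N(G)$ with $W$ eventually vanishes, so $H_\fF \subseteq N(G)$ acts nilpotently on $W$.

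For the full statement I would take the defining series $0 = W_0 \subset W_1 \subset \cdots \subset W_n = V^{(H,\fH+)}$ of $H$-submodules with $\fH$-central factors. The main step shows that $H_\fF$ acts nilpotently on each $W_i/W_{i-1}$, and nilpotency of action is extension-closed, so $H_\fF$ acts nilpotently on $V^{(H,\fH+)}$, i.e.\ $V^{(H,\fH+)}$ is an $\fN$-hypercentral submodule for $H_\fF$, hence contained in the maximal such submodule $V^{(H_\fF,\fN+)}$. I do not anticipate a serious obstacle; the crux is simply the identity $G/N(G) \in \fF$ for $G \in \fH$, which is immediate from the definition of $\fH/\fN$.
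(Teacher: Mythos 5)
Your proof is correct and follows essentially the same strategy as the paper: form the split extension of the $\fH$-central factor by $H$, observe that it lies in $\fH$, and invoke the definition of $\fF=\fH/\fN$ to force $H_{\fF}$ into a nilpotent ideal acting nilpotently on that factor, then pass to the filtration of $V^{(H,\fH+)}$. The only difference is in the bookkeeping: the paper factors the split extension $X$ of $V$ by $H/\cser_H(V)$ by the ideal $V$ itself (which tacitly uses $N(X)=V$, coming from faithfulness of $H/\cser_H(V)$ on the irreducible $V$), whereas you factor $G$ by $N(G)$, which gives $G/N(G)\in\fF$ immediately from Definition~\ref{def-div} and yields the slightly weaker, but entirely sufficient, conclusion that $H_{\fF}$ acts nilpotently rather than trivially on each central factor.
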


\begin{proof} Consider first the case where $V$ is an $\fH$-central irreducible $H$-module.  Let $C$ be the centraliser of $V$ in $H$.  Since $V$ is $\fH$-central, the split extension $X$ of $V$ by $H/C$ is in $\fH$ and $X/V \in \fF$.  Thus $C \supseteq H_{\fF}$.  From this, it follows for any $V$, that $H_{\fF}$ acts nilpotently on $V^{(H,\fH+)}$.  Thus $V^{(H,\fH+)} \subseteq V^{(H_{\fF}, \fN+)}$.
\end{proof}

Now for the Lie algebra analogue of Theorem \ref{Darcy}

\begin{theorem} \label{KstrH}  Suppose $\fH  \stains \fK$ are saturated formations.  Let $\fF = \fH/\fN$.   Then  for each $H \in \fH$,  the $\fF$-residual $H_\fF$ is $\fK$-hypercentral.  
\end{theorem}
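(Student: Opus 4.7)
The approach is to adapt D'Arcy's argument from the group setting (DH, VII.5.1), using the quotient $\fF = \fH/\fN$ in place of the canonical local definition and leveraging Lemmas~\ref{divbyN} and~\ref{nilformsub} together with the boundary/avoidance language of the previous section. I interpret the conclusion that ``$H_\fF$ is $\fK$-hypercentral'' as the statement that every chief factor of $H$ lying in $H_\fF$ is $\fK$-central as an $H$-module, in direct analogy with the group case.

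I argue by minimal counterexample. Fix $H \in \fH$ of smallest dimension such that $H_\fF$ has an $H$-chief factor $A/B$ that is not $\fK$-central. Since $\fF$ is a formation we have $(H/B)_\fF = H_\fF/B$, and minimality lets us reduce to $B = 0$, with $A$ a minimal ideal of $H$ inside $H_\fF$ that is not $\fK$-central. Form the primitive algebra $P$ obtained by splitting $A$ off from the quotient $H/C_H(A)$. Because $A$ is not $\fK$-central, $P \notin \fK$; and $H/C_H(A) \in \fH$ is an $\fH$-projector of $P$, so $\fH \stains \fK$ promotes any $\fK$-projector of $H/C_H(A)$ to a $\fK$-projector of $P$, placing $P$ in $a(\fK)$.

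The contradiction is then extracted by exploiting $A \subseteq H_\fF$. Lemma~\ref{divbyN} applied to $V = A$ gives $V^{(H,\fH+)} \subseteq V^{(H_\fF,\fN+)}$: either $H_\fF$ acts trivially on $A$, in which case unpacking $\fF = \fH/\fN$ shows $A$ must be $\fH$-central and a direct argument then forces it to be $\fK$-central, a contradiction; or $A$ is $\fH$-hypereccentric. In the second case, Lemma~\ref{versions} lines up a $\fK$-projector $K$ of $H$ with the $\fH$-projector $H/C_H(A)$ of $P$, and Lemma~\ref{nilformsub} applied to $K \in \fK$ together with the nilpotent ideal generated by $A$ should yield the contradiction via the module-to-subalgebra promotion intrinsic to that lemma.

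The main obstacle will be this last step. Strong containment alone amounts to $b(\fH) \subseteq a(\fK)$, so having $P \in a(\fK)$ does not by itself rule out $A$ being $\fK$-eccentric; the extra leverage must come specifically from $A$ sitting inside $H_\fF$, which is what distinguishes this theorem from the general avoidance statement. The technical heart will be showing that Lemma~\ref{nilformsub} can indeed be applied here with the right choice of ambient algebra, subalgebra, and nilpotent ideal, so that the inclusion $V^{(H,\fH+)} \subseteq V^{(H_\fF,\fN+)}$ of Lemma~\ref{divbyN} gets converted into the subalgebra containment needed to force $\fK$-centrality.
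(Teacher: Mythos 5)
There is a genuine gap, and you have in fact located it yourself: the step you defer as ``the technical heart'' is precisely the step that carries the whole proof, and your setup cannot supply it. Working with a single chief factor $A$ of $H$ and the primitive algebra $P$ built from it only yields $P \in a(\fK)$, which (as you observe) is perfectly consistent with $A$ being $\fK$-eccentric --- indeed it essentially asserts it. No amount of massaging that primitive quotient will produce the hypothesis of Lemma~\ref{nilformsub}, because that hypothesis quantifies over \emph{every} module of the ambient algebra, whereas the chief factors of $H$ itself see far too little. (There are also smaller problems in your reduction: $H/\cser_H(A)$ is an $\fH$-projector of $P$ only when $A$ is $\fH$-eccentric, and you never verify that the ideal you feed to Lemma~\ref{nilformsub} is nilpotent.)

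The paper's proof gets the missing leverage by applying strong containment not to one primitive algebra but to the split extension $L = V \rtimes H$ for an \emph{arbitrary} $H$-module $V$. Writing $W = V^{(H,\fH+)}$ and $X = V^{(K,\fK+)}$ for a $\fK$-projector $K$ of $H$, one identifies $W+H$ as the unique $\fH$-projector of $L$ containing $H$, $X+K$ as the unique $\fK$-projector of $L$ containing $K$, and $(X\cap W)+K$ as the unique $\fK$-projector of $W+H$ containing $K$; the relation $\fH \stains \fK$ forces the last two to coincide, whence $X \subseteq W$. Combined with Lemma~\ref{divbyN} this gives $V^{(K,\fK+)} \subseteq V^{(H_\fF,\fN+)}$ for every $H$-module $V$, and since $H/N(H) \in \fF$ makes $H_\fF$ a nilpotent ideal of $H$, Lemma~\ref{nilformsub} yields $H_\fF \subseteq K$; the $\fK$-hypercentrality of $H_\fF$ then follows from the covering theorem of \cite{Cov}, a final citation your argument also omits. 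You should replace the minimal-counterexample/primitive-algebra framework with this ``test against all split extensions'' device; the rest of your ingredient list (Lemmas~\ref{divbyN} and~\ref{nilformsub}) is correct.
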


\begin{proof}     Let $H \in \fH$ and let $K$ be a $\fK$-projector
of $H$.  Let $V$ be an $H$-module and let $L$ be the split extension of $V$ by $H$.  Put $W = V^{(H,\fH+)}$. By Lemma \ref{divbyN},  $W \subseteq V^{(H_{\fF}, \fN+)}$.
Now $W+H$ is the unique $\fH$-projector of $L$ which contains $H$.  Also,  $X = V^{(K,\fK+)}$ is a $K$-submodule and $X+K$ is the unique $\fK$-projector
of $L$ which contains $K$, while $(X\cap W)+K$ is the unique $\fK$-projector of $W+H$ which
contains $K$.  Since $\fK \str \fH$, we must have $X \subseteq W$.  By  Lemma \ref{nilformsub}, $H_\fF\subseteq K$ and by Barnes  \cite[Theorem 4]{Cov}, $H_{\fF}$ is $\fK$-hypercentral.
\end{proof}

\section{Guti\'errez Garc\'ia containment} 
In \cite{Gstr}, I. Guti\'errez Garc\'ia introduced two weakened versions of strong containment for finite soluble groups, called G- and D-strong containment.  

\begin{definition} Let $\fF$ and $\fH$ be two saturated formations of finite soluble groups with $h$ the canonical local definition of $\fH$.  Suppose $\ch(\fF) \subseteq \ch(\fH)$.  We say that $\fF$ is G-strongly contained in $\fH$, written $\fF \strG \fH$, if, for each $H \in \fH$, an $\fF$-projector $E$ of $H$ satisfies $H_{h(p)} \subseteq E$ for each $p \in \ch(\fF)$. 

We say that $\fF$ is D-strongly contained in $\fH$, written $\fF \str_\text{D} \fH$, if, for each $H \in \fH$ an $\fF$-projector $E$ of $H$ satisfies $H_{h(p)} \subseteq E$ for each $p \in \ch(\fH)$.
\end{definition}

For Lie algebras, there are no considerations of different primes and we can avoid the assumption that the formations are locally defined. 

\begin{definition}  Let $\fK \subset\fH$ be saturated formations of soluble Lie algebras.  Let $\fF = \fH/\fN$. We say that $\fK$ is Guti\'errez Garc\'ia contained in $\fH$, written $\fK \strG \fH$, if for all $H \in \fH$,  $H_\fF$ is $\fK$-hypercentral.
\end{definition}

This is equivalent to the condition that for each $H \in \fH$, there exists a $\fK$-projector $K$ of $H$ such that $H_{\fF} \subseteq K$.    By Theorem \ref{KstrH}, if $\fK \str \fH$ then $\fK \strG \fH$.

In the following, $\fK \strG \fH$ and $\fF = \fH/\fN$.

\begin{lemma}\label{Uec} Suppose $\fH \ne \fF, \fS$.  Then there exists $L \notin\fF$ with a minimal ideal $A$ such that $L/A \in \fF$ and with an $\fH$-central but $\fK$-eccentric module $U$ such that $L/\cser_L(U) \in \fK$.
\end{lemma}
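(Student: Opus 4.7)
The plan is to produce $L$ and $A$ from a minimal witness of $\fH\setminus\fF$, and to build $U$ by exploiting the strict containment $\fK\subsetneq\fH$.

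First, since $\fH\ne\fF$, pick $H\in\fH\setminus\fF$ of minimal dimension.  A standard subdirect product argument shows $H$ is monolithic: if $H$ had two distinct minimal ideals $A_1,A_2$, then minimality of $\dim H$ would give $H/A_1,H/A_2\in\fF$, and since $A_1\cap A_2=0$ the diagonal embedding would make $H$ a subdirect product of members of $\fF$, forcing $H\in\fF$ and contradicting the choice.  Hence $H$ has a unique minimal ideal $A$, and minimality of $\dim H$ gives $H/A\in\fF$.  Setting $L:=H$, conditions (i)--(iii) hold, and $L_{\fF}=A$ (from $L\notin\fF$ together with $L/A\in\fF$).

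Next, apply the hypothesis $\fK\strG\fH$: since $L\in\fH$, the ideal $L_{\fF}=A$ is $\fK$-hypercentral.  Being a minimal ideal, this reduces to $A$ being $\fK$-central as an $L$-chief factor; that is, the split extension $A\rtimes(L/\cser_L(A))$ lies in $\fK$.  For the primitive $L$, $\cser_L(A)=A$ (the complement of $A$ acts faithfully), so this split extension is $L$ itself, giving $L\in\fK$.  Thus $L\in\fH\cap\fK$, and for any $L$-module $U$ the condition $L/\cser_L(U)\in\fK$ holds automatically as a quotient of $L$; condition (v) comes for free.

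For condition (iv), I exploit $\fK\subsetneq\fH$: choose a primitive algebra $Y$ of minimal dimension in $\fH\setminus\fK$, so $Y=V\rtimes Q$ with $V$ the unique minimal ideal of $Y$, $Q=Y/V\in\fK$, and $Y\in\fH\setminus\fK$.  If $Q$ can be realised as a quotient of $L$---specifically through $L\twoheadrightarrow L/A=M\twoheadrightarrow Q$, which demands that $Q$ be a quotient of $M\in\fF\cap\fK$---then $U:=V$, viewed as an $L$-module through this surjection, has $\cser_L(U)=\ker(L\to Q)$, so $L/\cser_L(U)\cong Q\in\fK$, and the split extension $U\rtimes L/\cser_L(U)\cong V\rtimes Q=Y$ lies in $\fH\setminus\fK$; hence $U$ is $\fH$-central and $\fK$-eccentric.

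The main obstacle is guaranteeing that $Q$ actually appears as a quotient of $M=L/A$.  If the minimal $H$ chosen in the first step does not admit the prescribed $Q$, the fix is to co-construct $L$ and $Y$: instead of starting with $H$, start from the primitive $Y\in\fH\setminus\fK$ and then build $L$ of the form $A'\rtimes Q$ for a suitable $Q$-module $A'$ engineered so that $L\in\fH\setminus\fF$ and $L/A'=Q\in\fF$, so the reduction of the first two steps still applies to this $L$.  Verifying that such a co-construction can always be carried out under the standing hypotheses $\fK\strG\fH$, $\fH\ne\fF$, $\fH\ne\fS$---in particular that the required $Q\in\fF\cap\fK$ and $Q$-module $A'$ exist simultaneously---is the technical heart of the proof.
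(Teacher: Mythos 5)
There is a genuine gap, and you flag it yourself: your last paragraph leaves ``the technical heart of the proof'' --- arranging for the prescribed $Q$ to occur as a quotient of the minimal witness $L$ of $\fH\setminus\fF$ --- as an unverified ``co-construction''. Nothing forces a single monolithic algebra to play both roles, and the proposed repair (building $L$ as a split extension of some $A'$ by the given $Q$) is neither carried out nor obviously feasible. The obstacle is in fact illusory, because the lemma does not ask for $L$ to be monolithic, primitive or minimal: one simply takes a direct sum. With $L^1$ your minimal witness of $\fH\setminus\fF$ and $A$ a minimal ideal of $L^1$, and with $P$ a minimal witness of $\fH\setminus\fK$, $U=\soc(P)$, $Q=P/U$, put $L=L^1\oplus Q$. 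Then $A$ is still a minimal ideal of $L$; $L/A\cong(L^1/A)\oplus Q$ lies in $\fF$ because formations are closed under subdirect (in particular direct) products; $L\notin\fF$ because its quotient $L^1$ is not; and $U$, inflated along the projection $L\to Q$, has $L/\cser_L(U)$ isomorphic to a quotient of $Q\in\fK$. (That $Q\in\fF$ and that $U$ is $\fK$-eccentric both use the standing hypothesis $\fK\strG\fH$: if $U$ were $\fK$-central then, since $P/U\in\fK$ and $\fK$ is saturated, $P$ would lie in $\fK$; hence $U$ is $\fK$-eccentric, and since $P_\fF$ is $\fK$-hypercentral it cannot contain $U$, forcing $P_\fF=0$ and $Q\in\fF$.) This is exactly the route the paper takes, and it makes your ``main obstacle'' disappear.

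A secondary problem is your middle paragraph. From the $\fK$-hypercentrality of $A=L_\fF$ you deduce $\cser_L(A)=A$ and hence $L\in\fK$ ``for the primitive $L$''; but you only established that $L$ is monolithic, and a minimal member of $\fH\setminus\fF$ need not be primitive --- its unique minimal ideal could, for instance, be central, in which case $\cser_L(A)=L$. So neither $\cser_L(A)=A$ nor $L\in\fK$ is justified. Fortunately that paragraph is also unnecessary: in your own construction $U$ is pulled back from a $Q$-module with $Q\in\fK$, so $L/\cser_L(U)\in\fK$ holds without any appeal to $L\in\fK$.
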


\begin{proof}   Take $L^1 \in \fH, L^1 \notin \fF$ of least possible dimension.  Let $A$ be a minimal ideal of $L^1$.  Take $P \in \fH, P \notin \fK$ of least possible dimension.  Let $U = \soc(P)$ and $Q = P/U$.  Then $Q \in \fF \cap \fK$ and $U$ is an $\fH$-central and $\fK$-eccentric $Q$-module.  Put $L = L^1 \oplus Q$.  Then $A$ is a minimal ideal of $L$ and $L/A \in \fF$ and $U$ is an $L$-module with the required properties.
\end{proof}

\begin{lemma}\label{nontriv}  Let $A$ be an ideal of the Lie algebra $L$ and let $V$ be an $L$-module with $AV \ne 0$.  Then there exists a section $V' = X/Y$ of $V$ such that $AV'$ is the only minimal submodule of $V'$.
\end{lemma}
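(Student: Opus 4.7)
The plan is to take $X = V$ and find a suitable $Y$ by a maximality argument. Since $AV \neq 0$, the zero submodule is a proper submodule of $V$ not containing $AV$, so by finite dimensionality one can choose $Y \subsetneq V$ to be maximal with respect to the property $AV \not\subseteq Y$. I would then set $V' = V/Y$. Because $A \id L$, $AV$ is an $L$-submodule of $V$, and $A(V/Y) = (AV+Y)/Y$, which is nonzero by the choice of $Y$.

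To verify that $AV' = (AV+Y)/Y$ is the only minimal submodule of $V'$, I would show that every nonzero submodule of $V'$ contains $AV'$. Such a submodule has the form $W/Y$ with $Y \subsetneq W \subseteq V$. If $W = V$, then trivially $AV \subseteq V = W$; otherwise $Y \subsetneq W \subsetneq V$, and the maximality of $Y$ forces $AV \subseteq W$. Either way $AV' \subseteq W/Y$. It follows that $AV'$ is itself simple, since any nonzero submodule of $AV'$ would have to contain $AV'$ and hence equal it, and $AV'$ sits inside every nonzero submodule of $V'$; so it is the unique minimal submodule.

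There is no real obstacle here: the whole argument is a standard subquotient construction via a maximality principle, and beyond checking that $AV$ is an $L$-submodule (using $A \id L$) and the identity $A(V/Y) = (AV+Y)/Y$, nothing further is needed.
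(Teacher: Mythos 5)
Your proof is correct and matches the paper's argument in its essential step: the paper likewise chooses $Y$ of largest possible dimension subject to $Y \not\supseteq AX$ and observes that the resulting quotient has $AX'$ as its unique minimal submodule. The only difference is that the paper first replaces $V$ by a submodule $X$ of least dimension with $AX \neq 0$ before maximizing $Y$; as your argument shows, that preliminary minimization is not needed for the conclusion as stated.
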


\begin{proof} Take $X$ a submodule of $V$ of least possible dimension subject to the requirement that $AX \ne 0$.  Take $Y \subset X$ of largest possible dimension subject to $Y \not\supseteq AX$.  Then $V' = X/Y$ has the required properties.
\end{proof}

\begin{lemma}\label{indec}  Let $L,A,U$ be as given by Lemma \ref{Uec}.  There exists an $\fH$-hypercentral  $L$-module $V$ with the following properties:
\begin{description}
\item[(a)] $V$ has a unique minimal submodule $W$.
\item[(b)] $AV=W$.
\item[(c)] $W$ is $\fK$-eccentric.
\end{description}
\end{lemma}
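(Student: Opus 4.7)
The plan is to construct $V$ as a section of the $L$-module $V_0 = U \otimes_F L$, with $L$ acting diagonally---on the first factor as given on $U$, on the second factor by the adjoint representation---and then apply Lemma \ref{nontriv} with the ideal $A$ to cut $V_0$ down. The first step is to check that $V_0$ is $\fH$-hypercentral: since $L = L^1 \oplus Q$ lies in $\fH$ and $\fH$ is saturated, every chief factor of $L$ (equivalently, every composition factor of $L$ viewed as an $L$-module) is $\fH$-central; tensoring with the $\fH$-central $U$ and using that $\fH/\fN$ is closed under quotients and subdirect products shows that each composition factor of $V_0$ is again $\fH$-central.

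The crucial feature is that $A \subseteq L^1$ centralises $U$ (in Lemma \ref{Uec}, $U$ is a $Q$-module pulled back along the projection $L \to L/L^1 = Q$), so $a \cdot (u \otimes y) = u \otimes [a,y] \in U \otimes A$, giving $AV_0 \subseteq U \otimes A$. Under the splitting $L = L^1 \oplus Q$, the module $U \otimes A$ is an external tensor product in which $L^1$ acts only on the $A$-factor and $Q$ only on $U$, so for any simple composition factor $T$ of $U \otimes A$ the quotient $L/\cser_L(T)$ surjects onto $Q/\cser_Q(U_i)$ for some composition factor $U_i$ of $U$. Since $U_i$ is $\fK$-eccentric, $Q/\cser_Q(U_i) \notin \fK/\fN$, and quotient-closure forces $L/\cser_L(T) \notin \fK/\fN$; that is, $T$ is $\fK$-eccentric. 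Provided $A \not\subseteq Z(L)$, so that $A$ acts non-trivially on $V_0$, Lemma \ref{nontriv} then produces a section $V = X/Y$ of $V_0$ in which $W := AV$ is the unique minimal submodule; properties (a) and (b) are immediate, and (c) holds because the simple $W$ is a composition factor of $U \otimes A$.

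The main obstacle is the case $A \subseteq Z(L)$: then $[A,L] = 0$ and $A$ acts trivially on every module built from the adjoint representation, so the $V_0$ above does not work. Here one must instead exhibit an $\fH$-hypercentral $L$-module on which the central $A$ acts as a non-zero $L$-endomorphism; a natural candidate is a non-split self-extension of a simple $\fK$-eccentric $\fH$-central composition factor $W_0$ of $U$, realised for example as $V_0 = W_0 \otimes_F F[t]/(t^2)$ with $a \in A$ acting by multiplication by $t$. This is a valid $L$-module whenever $A \not\subseteq [L,L]$; the residual sub-case $A \subseteq [L,L] \cap Z(L)$ is the most delicate point and will presumably require a cohomological construction, possibly via a $p$-envelope of $L$ in the spirit of Lemma \ref{nilformsub}.
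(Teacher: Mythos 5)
Your construction breaks down at two points, and they are exactly the points where the paper has to reach for non-trivial external input. First, the module $U\otimes_F L$ with the adjoint action on the second factor is useless whenever $[A,L]=0$: since $A$ is a minimal (hence abelian) ideal of the soluble algebra $L^1$, the case $A\subseteq Z(L)$ is entirely possible (for instance when $L^1$ is nilpotent), and your proposed fallback for that case is only a sketch whose ``residual sub-case'' you explicitly leave open. The paper avoids the dichotomy altogether by starting from a \emph{faithful} $\fH$-hypercentral $L$-module $V_1$, which exists by Barnes \cite[Theorem 5.1]{Ado}; faithfulness gives $AV_1\neq 0$ for free, and Lemma \ref{nontriv} then yields (a) and (b) in every case. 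Without that theorem, or something equivalent, your argument does not cover all the algebras $L$ produced by Lemma \ref{Uec}.

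Second, your proof of (c) begs the question in a way that matters for this paper. You argue ``$U_i$ is $\fK$-eccentric, hence $Q/\cser_Q(U_i)\notin\fK/\fN$'' and then ``$L/\cser_L(T)\notin\fK/\fN$, hence $T$ is $\fK$-eccentric''. Only the second implication is valid in general; the first asserts that $\fK$-centrality of an irreducible module is detected by whether $L/\cser_L(\cdot)$ lies in $\fK/\fN$, which is precisely the statement that $\fK=\loc(\fK/\fN)$, i.e.\ that $\fK$ is locally defined --- the hypothesis the whole paper is careful \emph{not} to assume. (The same defect infects your justification that the tensor product is $\fH$-hypercentral via ``closure under quotients and subdirect products''.) The paper instead quotes two genuine theorems: Barnes \cite[Theorem 2.1]{B2}, that a tensor product of $\fH$-hypercentral modules is $\fH$-hypercentral, and \cite[Theorem 2.3]{B6}, that a tensor product of a $\fK$-central with a $\fK$-eccentric module is $\fK$-hypereccentric provided $L$ acts through a quotient lying in $\fK$. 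The hypothesis $L/\cser_L(U)\in\fK$ built into Lemma \ref{Uec}, which your proposal never uses, is there precisely to make the second of these applicable. You would need to quote or reprove those results; the ``external tensor product'' reduction is not a substitute for them.
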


\begin{proof}  By Barnes \cite[Theorem 5.1]{Ado}, there exists a faithful $\fH$-hypercentral $L$-module $V_1$.  We have $AV_1 \ne 0$.  By Lemma \ref{nontriv}, there exists a section $V_2$ of $V_1$ with $W_2 = AV_2$ satisfying the conditions (a) and (b).  If $W_2$ is $\fK$-eccentric, we are done, so suppose that $W_2$ is $\fK$-central.

Let $C = \cser_L(A)$.  Since $A$ is $\fK$-central, $L/C \in \fK$.  Also $L/\cser_L(U) \in \fK$.  Put $D = C \cap \cser_L(U)$.  Then $L/D \in \fK$ and $A, U$ are $L/D$-modules. It follows from Barnes \cite[Theorem 2.3]{B6}, that $U \otimes W_2$ is $\fK$-hypereccentric.  We form $V_3 = U \otimes V_2$.  Then $V_3$ is an $\fH$-hypercentral $L$-module by Barnes \cite[Theorem 2.1]{B2}.  Further, we have $AV_3 = U \otimes W_2 \ne 0$.  By Lemma \ref{nontriv},  we obtain a section $V$ which, with $W = AV$, satisfies all the conditions (a), (b) and (c).
\end{proof}

\begin{theorem} \label{noG} Suppose that $\fK \strG \fH$ and that $0 \ne \fK \ne \fH \ne \fS$. Then $\fH = \fH/\fN$.
\end{theorem}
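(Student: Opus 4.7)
I would argue by contradiction. Suppose $\fH \ne \fF$, where $\fF = \fH/\fN$. Since every element of $\fH/\fN$ is a quotient of an algebra in $\fH$ and $\fH$ is closed under quotients, the inclusion $\fF \subseteq \fH$ holds automatically; combined with the supposition, this yields some algebra in $\fH \setminus \fF$. Lemma \ref{Uec} then delivers $L \in \fH$, $L \notin \fF$, with a minimal ideal $A$ satisfying $L/A \in \fF$, together with an $\fH$-central but $\fK$-eccentric $L$-module $U$ for which $L/\cser_L(U) \in \fK$. Lemma \ref{indec} in turn produces an $\fH$-hypercentral $L$-module $V$ whose unique minimal submodule $W = AV$ is $\fK$-eccentric.

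Next form the split extension $L^*$ of $V$ by $L$, with the aim of applying the hypothesis $\fK \strG \fH$ to it. The first task is to verify $L^* \in \fH$. Since $\fH$ is saturated and $V$ is $\fH$-hypercentral over $L \in \fH$, one filters $V$ by $L$-submodules whose successive quotients are $\fH$-central irreducibles and checks inductively that each intermediate split extension remains in $\fH$. I expect this verification to be the main obstacle, as it is a Lie-algebraic instance of the principle that a saturated formation is closed under $\fH$-hypercentral extensions, and it must be handled without appeal to a local definition of $\fH$ (since the whole point of the theorem under study is that $\fH$ need not be locally defined).

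Granting $L^* \in \fH$, I analyse the $\fF$-residual $L^*_\fF$. From $L^*/(V+A) \cong L/A \in \fF$ we read $L^*_\fF \subseteq V + A$, while $L^*/V \cong L \notin \fF$ prevents $L^*_\fF \subseteq V$. Since $A$ is a minimal $L$-submodule of $(V+A)/V$, the image of $L^*_\fF$ there is all of $A$. Computing brackets in $L^*$ then gives $W = AV = [A,V] \subseteq [L^*_\fF, V] \subseteq L^*_\fF$. Because $V$ is abelian in $L^*$, the $L^*$-action on $V$ factors through $L$, so $W$ remains a minimal $L^*$-submodule of $L^*_\fF$ and appears as its bottom composition factor. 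Since $W$ is $\fK$-eccentric, $L^*_\fF$ is not $\fK$-hypercentral, which contradicts $\fK \strG \fH$ applied to $L^* \in \fH$. This contradiction forces $\fH = \fH/\fN$, as required.
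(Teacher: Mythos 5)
Your proposal is correct and follows essentially the same route as the paper: the same construction of $L, A, U, V, W$ via Lemmas \ref{Uec} and \ref{indec}, the same split extension $L^*$, and the same contradiction between the $\fK$-eccentric chief factor $W$ inside $L^*_\fF$ and the hypothesis $\fK \strG \fH$; your direct verification that $L^*_\fF \not\subseteq V$ forces $W = [A,V] \subseteq L^*_\fF$ is just a mild repackaging of the paper's observation that every non-zero ideal of $L^*$ inside $A+V$ contains $W$, together with its two-case analysis of $L^*_\fF$. The step you flag as the main obstacle, namely $L^* \in \fH$, is exactly what the paper also asserts without further argument, since closure of a saturated formation under $\fH$-hypercentral extensions is standard in this setting (it is the content of the hypercentral-module results of \cite{B2} and \cite{Cov} that the paper relies on elsewhere), so no genuine gap remains.
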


\begin{proof}  Let $\fF = \fH/\fN$.   Take $L, A, U, V, W$ as given by  Lemmas  \ref{Uec} and \ref{indec}.  Let $L^*$ be the split extension of $V$ by $L$.  Then $L^* \in \fH$ since $L \in \fH$ and $V$ is $\fH$-hypercentral.

Let $X$ be any non-zero ideal of $L^*$ which is contained in $A+V$.  If $X \subseteq V$, then $X \supseteq W$ since $W$ is the only minimal submodule of $V$.  If $X \not\subseteq V$, then there exists $a+v \in X$, $a\in A, v \in V$ with $a \ne 0$.  The centraliser $\cser_A(V)$ is an ideal of $L$ and, as $A$ is minimal and acts non-trivially, $\cser_A(V)=0$.  Thus $aV \ne 0$.  Since $X$ is an ideal of $L^*$, $aV \subseteq X \cap W$.  It follows that $X \supseteq W$. 

Consider $L^*_{\fF}$.  As $L_{\fF} = A$, we have $L^*/(A+V) \in \fF$.  Thus $L^*_{\fF} \subseteq A+V$.  If $L^*_{\fF} \ne 0$, then  $L^*_{\fF} \supseteq W$.  But $W$ is $\fK$-eccentric contrary to $\fK \strG \fH$.  Therefore $L^*_{\fF}=0$ and $L^* \in \fF$.  But $L \simeq L^*/(V+M)$, so $L \in \fF$ contrary to the choice of $L$.  Therefore $\fH = \fF$.
\end{proof}

\begin{lemma}\label{notequal} Suppose $\loc(\fF) = \fH \ne \fS$.  Then $\fF \ne \fH$.
\end{lemma}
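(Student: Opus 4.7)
The plan is a proof by contradiction. Assume $\fF = \fH$, so that $\fH = \loc(\fH)$; I will exhibit a soluble Lie algebra that lies outside $\fH$ yet satisfies the local criterion for $\loc(\fH)$-membership, giving the required contradiction.

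Since $\fH \ne \fS$, I choose $L$ soluble of least possible dimension with $L \notin \fH$; then $L \in b(\fH)$. The excerpt records that $b(\fH) \subseteq a(\fH)$, and $a(\fH)$ is defined as a class of primitive algebras, so $L$ is primitive with a unique minimal ideal $A = \soc(L)$ satisfying $\cser_L(A) = A$. Minimality of $L$ gives $L/A \in \fH$.

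The heart of the argument is then to verify that every chief-factor centralizer quotient $L/\cser_L(M)$ lies in $\fH$. I take a chief series $0 = A_0 \subset A = A_1 \subset A_2 \subset \cdots \subset A_n = L$ passing through $A$. For $M = A$ we get $L/\cser_L(A) = L/A \in \fH$. For a chief factor $M = A_{i+1}/A_i$ with $i \ge 1$ we have $A \subseteq A_i$, and since $A$ is an ideal, $[A, A_{i+1}] \subseteq A \subseteq A_i$, whence $A \subseteq \cser_L(M)$. Thus $L/\cser_L(M)$ is a quotient of $L/A \in \fH$ and lies in $\fH$ because $\fH$ is a formation. Hence $L \in \loc(\fH) = \fH$, contradicting the choice $L \notin \fH$, and therefore $\fF \ne \fH$.

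The one point requiring care is pinning down the working definition of $\loc$: I am using the standard Lie-algebra convention that $L \in \loc(\fF)$ iff $L/\cser_L(M) \in \fF$ for every chief factor $M$ of $L$. Granted that, the argument needs nothing beyond the inclusion $b(\fH) \subseteq a(\fH)$ already noted in the text, together with the elementary observation that the socle of a primitive algebra centralizes every chief factor lying above it.
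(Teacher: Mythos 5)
Your argument is correct and follows the same route as the paper's: take a minimal counterexample $L \notin \fH$, note it is primitive with $L/\soc(L) \in \fH = \fF$, and conclude $L \in \loc(\fF) = \fH$, a contradiction. The only difference is that you explicitly unpack the chief-factor verification that the paper leaves implicit in the step "$L/A \in \fF$, so $L \in \loc(\fF)$", which is a harmless (and welcome) elaboration.
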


\begin{proof} Suppose $\fF = \fH$.  Take $L \notin \fH$ of least possible dimension.  Then $L$ is primitive.  Let $A = \soc(L)$.  Then $L/A \in \fH = \fF$, so $L \in \loc(\fF) = \fH$ contrary to assumption.
\end{proof}

\begin{cor}  Suppose that $\fK \str \fH$ non-trivially.  Then $\fH = \fH/\fN$ and $\fH$ is not locally defined.
\end{cor}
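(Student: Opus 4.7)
My plan is to observe that the corollary is a direct combination of the preceding three results, so the proof should be short and essentially bookkeeping. The hypothesis ``$\fK \str \fH$ non-trivially'' unpacks, by the listed trivial cases at the start of the paper, as $0 \ne \fK \ne \fH \ne \fS$, so all the hypotheses of Theorem \ref{noG} and Lemma \ref{notequal} are in force, provided I can first upgrade strong containment to Guti\'errez Garc\'ia containment.

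First I would invoke Theorem \ref{KstrH}: since $\fH \stains \fK$ with both saturated formations, for every $H \in \fH$ the $\fF$-residual $H_{\fF}$ (where $\fF = \fH/\fN$) is $\fK$-hypercentral. By the remark immediately after the definition of $\strG$, this is exactly the statement $\fK \strG \fH$. Combined with $0 \ne \fK \ne \fH \ne \fS$, Theorem \ref{noG} applies and yields $\fH = \fH/\fN$, which is the first conclusion.

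For the second conclusion, I would argue by contradiction. Suppose $\fH$ is locally defined, say $\fH = \loc(\fF')$ for some formation $\fF'$. By the cited result \cite[Theorem 3.3]{local} (stated in the paragraph following Definition \ref{def-div}), the defining formation of a locally defined saturated formation $\fH$ is forced to be $\fH/\fN$; that is, $\fF' = \fH/\fN$. Using the first conclusion, $\fF' = \fH/\fN = \fH$, so $\loc(\fF') = \fH = \fF'$. But Lemma \ref{notequal} asserts that whenever $\loc(\fF') = \fH \ne \fS$, we must have $\fF' \ne \fH$; since $\fH \ne \fS$ by our non-triviality assumption, this is a contradiction. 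Hence $\fH$ is not locally defined.

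There is essentially no obstacle; the only thing to be careful about is making sure each hypothesis of Theorem \ref{noG}, Lemma \ref{notequal}, and \cite[Theorem 3.3]{local} is actually verified, in particular that the non-triviality hypothesis translates correctly to the inequalities $0 \ne \fK \ne \fH \ne \fS$ and that $\fH \ne \fS$ is available at the moment Lemma \ref{notequal} is applied.
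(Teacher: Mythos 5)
Your proposal is correct and follows the paper's own route exactly: Theorem \ref{KstrH} upgrades $\fK \str \fH$ to $\fK \strG \fH$, Theorem \ref{noG} gives $\fH = \fH/\fN$, and Lemma \ref{notequal} (combined with the fact that a local definition must equal $\fH/\fN$) rules out local definability. You have merely made explicit the steps the paper's terse proof leaves implicit.
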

\begin{proof}  By Theorem \ref{KstrH}, $\fK \strG \fH$. By Lemma \ref{notequal}, $\fH$ is not locally defined.
\end{proof}

\bibliographystyle{amsplain}

\end{document}